\newtheorem{theorem}{Theorem}
\newtheorem{corollary}{Corollary}
\newtheorem{proposition}{Proposition}
\newtheorem{question}{Question}
\theoremstyle{remark}
\newtheorem{remark}{Remark}
\theoremstyle{definition}
\newcommand{\R}{\mathbb{R}}
\newcommand{\Z}{\mathbb{Z}}
\renewcommand{\SS}{\mathbb{S}}
\newcommand{\del}{\smash{\mskip3mu\lower1truept\hbox{\vdots}\mskip3mu}}
\renewcommand{\phi}{\varphi}
\renewcommand{\kappa}{\varkappa}
\begin{document}

\author{Andrey Ryabichev%
\footnote{\  Yu.I.Manin Laboratory of Algebra, Geometry, Logic and Number Theory, Higher School of Modern Mathematics, MIPT; Moscow, Russia //
\texttt{ryabichev@179.ru}
}}
\title{On vertex-minimal simplicial maps to the sphere}
\date{}


\maketitle

\begin{abstract}
For positive integers $n,d$, let $\lambda(n,d)$ be the minimal number of vertices of a triangulation of $n$-sphere
which admits a degree $d$ simplicial map onto the boundary of $(n+1)$-simplex.
We show that for $h=\lfloor\frac{n+1}2\rfloor$, the function $\lambda(n,d)^h$ is almost linear in $d$ as $d\to\infty$
answering a question by O.\,Musin.
All triangulations we obtain are isomorphic to boundaries of convex polytopes in $\R^{n+1}$.
\end{abstract}

\section{Introduction}\label{s:introduction}

Suppose $f:M\to N$ is a continuous map of closed connected oriented $n$-manifolds, $n\ge1$.
The {\it degree} $\deg f$ is an integer $d$ such that the $n$-th homology homomorphism
$f_*:H_n(M;\Z)=\Z\to\Z=H_n(N;\Z)$ is the multiplication by $d$.
If the map $f$ is smooth, then $\deg f$ equals the number of preimages of any regular value of $f$,
where the points at which $df$ preserves the orientation are counted as ``$1$'',
and the points at which $df$ reverses the orientation are counted as ``$-1$''.
If $f$ is a simplicial map of triangulated manifolds,
then the degree can be counted in a similar way via preimages of an $n$-simplex.

Namely, we call an $n$-simplex $\sigma\subset M$ {\it positive/negative},
if $f(\sigma)\subset N$ is non-degenerate $n$-simplex and $f|_\sigma$ preserves/reverses orientation.
Then $\deg f$ equals the number of positive preimages minus the number of negative preimages of any $n$-simplex $\tau\subset N$.
This difference does not depend on $\tau$.

Let $\SS_{n+2}^n$ be the standard triangulation of the $n$-sphere with $n+2$ vertices
and suppose $K$ is a simplicial complex such that the geometric realization $|K|$ is homeomorphic to the sphere~$\SS^n$.
Suppose there is a simplicial map $K\to\SS^n_{n+2}$ of degree $d$.
One may ask: {\it given $n$ and $d$, how few vertices can $K$ have?}
Denote the minimal number of vertices of such $K$ by $\lambda(n,d)$.

Evidently, if $d=0$ or $d=\pm1$, we have $\lambda(n,d)=n+2$,
since a triangulation of the $n$-sphere cannot have fewer vertices.
It is also easy to see that $\lambda(1,d)=3d$.
For $n=2$ and $d\ge3$ it is known by \cite{sarkaria} that $\lambda(2,d)=2d+2$
(the lower bound for $\lambda(2,d)$ obviously follows from the Euler's formula:
$K$ has at least $4d$ faces, so the number of vertices is at least $2+6d-4d$).

In \cite{basak-gupta-triverdi},
Basak, Gupta and Trivedi
take the first steps in studying $\lambda(n,d)$.
In \cite{musin-deg},
Apolonskaya and Musin
found values $\lambda(n,d)$
for small $d$.
They also ask the question,
{\it for what smallest $\omega$ does there $\lim\sup_{d\to\infty}\frac{\lambda(n,d)}{d^\omega}$ exist?}
We prove that $\omega=1/h$, where $h=\lfloor\frac{n+1}2\rfloor$.
This generalizes the facts mentioned above.

\begin{theorem}\label{th:lambda-in-h}
For any $n\ge1$, the value $\lambda(n,d)^h$ is asymptotically linear in $d$ (as $d\to\infty$).
More precisely, there are $c_1,c_2>0$ such that $c_1d\le\lambda(n,d)^h\le c_2d$ for $d\gg0$.
\end{theorem}

\begin{corollary}\label{cor:limsup-lambda}
$\lim\sup_{d\to\infty}\frac{\lambda(n,d)}{d^\omega}$
is zero for $\omega>1/h$,
exists and is positive for $\omega=1/h$,
and does not exist for $\omega<1/h$.
\end{corollary}


Note that the bounds from Theorem~\ref{th:lambda-in-h}
hold even if we
consider only {\it combinatorial} triangulations
or, moreover, only {\it polytopal} triangulations.
See remark~\ref{r:comb-nondegenerate}.

\section{Preliminaries}

A {\it simplicial complex} $K$ is a set of vertices $V(K)$ and a collection of its finite subsets (called {\it faces})
such that any subset of a face is itself a face.
We always denote the face with vertices $a_1,\ldots,a_k$ by $[a_1\ldots a_k]$.
A {\it simplicial map} of complexes is a map of the sets of vertices such the image of a face is a face.

One can think of a face as a (topological) simplex with given vertices,
the union of such simplices (glued along common faces and equipped with quotient topology)
is called {\it the geometric realization} $|K|$.
Everywhere below we make no difference between an abstract complex and its geometric realization.
If $|K|$ is an $n$-manifold, denote $F(K)$ the set of its $n$-simplices.

For simplicial complexes $K,L$,
their {\it join} is a complex $K\star L$
with vertices $V(K\star L)=V(K)\sqcup V(L)$
whose simplices are all simplices of $K$, all simplices of $L$,
and all simplices of the form $[a_1\ldots a_k]\star[b_1\ldots b_l]:=[a_1\ldots a_k b_1\ldots b_l]$,
where $[a_1\ldots a_k]\in K$ and $[b_1\ldots b_l]\in L$.

\begin{remark}\label{r:join-faces}
If $|K|\cong\SS^n$ and $|L|\cong\SS^m$, where $n,m\ge0$, then $|K\star L|\cong\SS^{n+m+1}$.
Note that $F(K\star L)=F(K)\times F(L)$ by the definition.
\end{remark}

For details on simplicial complexes, see e.\,g.\ \cite[\S1 and \S4]{matousek-discr},
and for a discussion of the degree, see e.\,g.\ \cite[\S2.2]{hatcher}.

\begin{proposition}\label{pr:star-correct}
Suppose $|K|\cong|K'|\cong\SS^k$ and $|L|\cong|L'|\cong\SS^m$.
Take simplicial maps $f:K\to K'$ and $g:L\to L'$.
Then there is a naturally defined map $f\star g:K\star L\to K'\star L'$,
and $\deg(f\star g)=\deg f\cdot\deg g$.
\end{proposition}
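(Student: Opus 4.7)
The plan is to define $f\star g$ on vertices by sending each $v\in V(K)$ to $f(v)$ and each $v\in V(L)$ to $g(v)$. To see this extends to a simplicial map, I observe that every face of $K\star L$ is of one of three types (a face of $K$, a face of $L$, or a join $\sigma\star\tau$ with $\sigma\in K$ and $\tau\in L$), and in each case its image under the vertex map is by construction a face of $K'\star L'$ of the corresponding type. Naturality/functoriality is immediate from this definition.

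For the degree I would use the combinatorial formula recalled in the introduction: $\deg(f\star g)$ equals the signed count of preimages of any fixed top-simplex in $K'\star L'$. Pick such a top-simplex in the form $\alpha\star\beta$ with $\alpha\in F(K')$ and $\beta\in F(L')$, which exists by Remark~\ref{r:join-faces}. By that same remark, top-simplices of $K\star L$ are exactly the joins $\sigma\star\tau$ with $\sigma\in F(K)$, $\tau\in F(L)$, and $(f\star g)(\sigma\star\tau)=f(\sigma)\star g(\tau)$. So the preimages of $\alpha\star\beta$ correspond bijectively to pairs $(\sigma,\tau)\in F(K)\times F(L)$ with $f(\sigma)=\alpha$ and $g(\tau)=\beta$, and the preimage is non-degenerate exactly when both $f|_\sigma$ and $g|_\tau$ are.

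The main step, and the one I expect to be most delicate, is the sign identity
\[
\eps(f\star g,\sigma\star\tau)=\eps(f,\sigma)\cdot\eps(g,\tau),
\]
where $\eps\in\{\pm1\}$ records whether the given restriction preserves or reverses orientation. I would fix orientations on the four spheres and orient each join by the convention that listing the $K$-vertices first (in a positively ordered way) followed by the $L$-vertices (in a positively ordered way) gives a positive orientation of $\sigma\star\tau$. Then the identity becomes the statement that concatenating two vertex permutations produces a permutation whose signature is the product of the two signatures, which is a classical fact. The genuinely subtle point is justifying that this concatenation orientation on $K\star L$ is indeed the orientation inherited from the sphere homeomorphism $|K\star L|\cong\SS^{k+m+1}$; I would verify this first in the model case of the standard join $\partial\Delta^{k+1}\star\partial\Delta^{m+1}=\partial\Delta^{k+m+2}$ and then transport it via orientation-preserving identifications to the general case.

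Once the sign identity is in place, the result follows by summing over preimages and factoring:
\[
\deg(f\star g)=\sum_{f(\sigma)=\alpha}\sum_{g(\tau)=\beta}\eps(f,\sigma)\,\eps(g,\tau)=\Bigl(\sum_{\sigma}\eps(f,\sigma)\Bigr)\Bigl(\sum_{\tau}\eps(g,\tau)\Bigr)=\deg f\cdot\deg g.
\]
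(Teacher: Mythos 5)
Your proposal is correct and follows essentially the same route as the paper's proof: define $f\star g$ on vertices, orient the join by concatenating ordered vertex lists of the factors, establish that the sign of $\sigma\star\tau$ is the product of the signs of $\sigma$ and $\tau$, and conclude by the combinatorial degree formula. You are merely more explicit than the paper about the final summation over pairs of preimages and about checking that the concatenation orientation agrees with the one inherited from the sphere, which the paper leaves implicit.
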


\begin{proof}
The map $f\star g$ is defined on the set of vertices, the image of a face is a face by the definition.
Note that the orientation of a simplex $[a_1\ldots a_{k+1}]$ is determined by the order of its vertices
and does not change under even permutation of these vertices.
Therefore the orientations of $K$ and $L$ determines the orientation of $K\star L$.
Finally, the sign of the simplex $[a_1\ldots a_{k+1}]\star[b_1\ldots b_{m+1}]$ under the map $f\star g$
equals the product of the signs of $[a_1\ldots a_{k+1}]\in K$ under $f$ and $[b_1\ldots b_{m+1}]\in L$ under $g$,
so the proposition follows.
\end{proof}

\begin{proposition}[Edge contraction]\label{pr:deg1}
For any $k,m\ge0$, there is a degree $1$ simplicial map $\SS^k_{k+2}\star\SS^m_{m+2}\to\SS^{k+m+1}_{k+m+3}$.
\end{proposition}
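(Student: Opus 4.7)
The plan is to construct the map explicitly by identifying exactly two source vertices with one another. Label the vertices of $\SS^k_{k+2}$ by $a_0,\ldots,a_{k+1}$, of $\SS^m_{m+2}$ by $b_0,\ldots,b_{m+1}$, and of $\SS^{k+m+1}_{k+m+3}$ by $v_0,\ldots,v_{k+m+2}$. I would define the vertex map $\phi$ by $\phi(a_i)=v_i$ for $0\le i\le k+1$, $\phi(b_j)=v_{k+2+j}$ for $0\le j\le m$, and $\phi(b_{m+1})=v_0$. Then $v_0$ is the only target vertex with two preimages, namely $a_0$ and $b_{m+1}$; every other $v_l$ has a unique preimage.

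The first step is to verify simpliciality: every face $A\sqcup B$ of the join, with $A\subsetneq V(\SS^k_{k+2})$ and $B\subsetneq V(\SS^m_{m+2})$, must map to a proper subset of $\{v_0,\ldots,v_{k+m+2}\}$. For $\phi(A\sqcup B)$ to contain every $v_l$ one would need $a_1,\ldots,a_{k+1}\in A$, $b_0,\ldots,b_m\in B$, and additionally one of $a_0$ or $b_{m+1}$ to produce $v_0$. The first alternative gives $A=V(\SS^k_{k+2})$ and the second gives $B=V(\SS^m_{m+2})$, both forbidden. Hence $\phi$ sends faces to faces.

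For the degree I would count non-degenerate preimages of each top simplex of the target. By remark~\ref{r:join-faces}, the top simplices of the join are the faces $F_{ij}=[a_0\ldots\widehat{a_i}\ldots a_{k+1}]\star[b_0\ldots\widehat{b_j}\ldots b_{m+1}]$, and $F_{ij}$ is mapped non-degenerately precisely when it omits at least one of $a_0,b_{m+1}$, i.e.\ when $i=0$ or $j=m+1$. A short case check matches these $k+m+3$ non-degenerate top simplices bijectively with the $k+m+3$ top simplices $V\setminus\{v_l\}$ of the target: $F_{0,m+1}$ corresponds to $l=0$, $F_{l,m+1}$ to $1\le l\le k+1$, and $F_{0,l-k-2}$ to $k+2\le l\le k+m+2$. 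Since every target top simplex has exactly one non-degenerate preimage, $\deg\phi=\pm1$; if the sign is $-1$ I would post-compose with a transposition of two target vertices.

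The calculation is essentially a matter of combinatorial bookkeeping once the vertex map is written down; the only subtlety I foresee is the sign of the degree, which is handled by the post-composition described above. No deeper obstacle arises because the numerology — $k+m+3$ non-degenerate top faces of the source versus $k+m+3$ top faces of the target — is exactly what one needs to force the map to be a degree $\pm1$ cover.
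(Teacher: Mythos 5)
Your proof is correct and takes essentially the same approach as the paper: an explicit vertex map that identifies exactly one vertex of each join factor with a common target vertex, the observation that a top simplex of the target has a unique non-degenerate preimage (hence $\deg=\pm1$), and a transposition of two vertices to fix the sign. The only differences are cosmetic — the paper identifies $a_{k+1},b_0$ with $c_{k+1}$ rather than $a_0,b_{m+1}$ with $v_0$, and checks a single fiber instead of all of them.
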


\begin{proof}
Denote the set
$V(\SS^k_{k+2})$ as $\{a_1,\ldots,a_{k+2}\}$, the set
$V(\SS^m_{m+2})$ as $\{b_1,\ldots,b_{m+2}\}$ and the set
$V(\SS^{k+m+1}_{n+m+3})$ as $\{c_1,\ldots,c_{k+m+3}\}$.
Define the map $f:V(\SS^k_{k+2}\star\SS^m_{m+2})\to V(\SS^{k+m+1}_{k+m+3})$ as
$a_1\mapsto c_1$, \ldots,
$a_{k+2}\mapsto c_{k+2}$ and
$b_1\mapsto c_{k+2}$, \ldots,
$b_{m+2}\mapsto c_{k+m+3}$.
Clearly, $f^{-1}([c_1\ldots c_{k+1} c_{k+3}\ldots c_{k+m+3}])$
is just one simplex $[a_1\ldots a_{k+1}]\star[b_2\ldots b_{m+2}]$,
therefore $\deg f=\pm1$.
If needed, swap two vertices to make it equal~1.
\end{proof}


\section{The examples}

In order to find $c_2$ in Theorem~\ref{th:lambda-in-h}, we will construct an example of a simplicial map with few vertices.
This construction uses two ideas.
The first idea is to take a join of simplicial maps (Proposition~\ref{pr:star-correct}),
the numbers of vertices will be added together, while the degrees will be multiplied.
The second idea is to increase the degree a little by adding a limited number of vertices, in the following way.

\begin{proposition}\label{pr:d+1}
For any $n>0$ and $d>0$ we have $\lambda(n,d+1)\le\lambda(n,d)+3$.
\end{proposition}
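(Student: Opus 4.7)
The plan is to perform a local surgery on $K_d$ by subdividing a single edge inside a positive nondegenerate $n$-simplex, introducing three new vertices and raising the degree of the map by $1$. The construction generalizes the 1-dimensional case, where a $3d$-gon of degree $d$ is turned into a $3(d+1)$-gon of degree $d+1$ by replacing one edge with a 4-edge path that winds once more around the target triangle.

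Since $d\ge 1$, there exists a positive nondegenerate $n$-simplex $\sigma=[v_0,\ldots,v_n]\in K_d$ with $f(v_j)=w_j$ after relabeling. Pick the edge $\sigma_1:=[v_0,v_1]\subset\sigma$ and consider its closed star $\sigma_1\star L$, where $L:=\mathrm{link}_{K_d}(\sigma_1)$ is a triangulated $(n-2)$-sphere. Replace $\sigma_1\star L$ inside $K_d$ with $D_1\star L$, where $D_1$ is the 4-edge path $v_0\to u_1\to u_2\to u_3\to v_1$ through three new vertices $u_1,u_2,u_3$. Since $\partial(D_1\star L)=\{v_0,v_1\}\star L=\partial(\sigma_1\star L)$, the result is a valid triangulation $K'$ of $\SS^n$ with $\lambda(n,d)+3$ vertices. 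Extend $f$ to $f'\colon K'\to\SS^n_{n+2}$ by setting $f'(u_1)=w_1$, $f'(u_2)=w_2$, $f'(u_3)=w_0$, so that $f'|_{D_1}$ traces $w_0\to w_1\to w_2\to w_0\to w_1$ inside the subtriangle $\SS^1_3\subset\SS^n_{n+2}$ on $\{w_0,w_1,w_2\}$---one extra revolution compared to the original edge $f|_{\sigma_1}$.

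By the join construction (cf.\ Proposition~\ref{pr:star-correct}), $f'|_{D_1\star L}=(f'|_{D_1})\star(f|_L)$ is a well-defined simplicial map, and a chain-level join identity yields
\[
f'_*[D_1\star L]-f_*[\sigma_1\star L]\;=\;(f'_*[D_1]-f_*[\sigma_1])\star f_*[L]\;=\;[\SS^1_3]\star f_*[L],
\]
since $f'_*[D_1]-f_*[\sigma_1]$ is a $1$-cycle in $\SS^1_3$ representing one additional loop. Using Proposition~\ref{pr:deg1} to identify $[\SS^1_3]\star[\SS^{n-2}_n]$ with $[\SS^n_{n+2}]$ in $H_n$, the overall degree change equals the degree of the induced link map $f|_L\colon L\to\SS^{n-2}_n$ onto the link of $[w_0,w_1]$ in $\SS^n_{n+2}$.

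The hard part will be ensuring that this link-map degree equals exactly $+1$ for a suitable choice of edge. The local degrees of all edges of $K_d$ mapping to $[w_0,w_1]$ sum to $d$, so some preimage edge carries nonzero local degree, but it could a priori exceed $\pm 1$ in absolute value. The technical crux is to show that, by varying $\sigma$ (and if necessary performing auxiliary local modifications to $K_d$), one can always locate an edge $\sigma_1\subset\sigma$ whose link map has degree exactly $\pm 1$, so that the sign of the winding in $D_1$ can be adjusted accordingly to realize the $+1$ increment in the total degree.
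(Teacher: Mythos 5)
Your construction stalls exactly where you say it does, and that gap is fatal to the argument as written. The degree increment you produce equals the degree of the induced link map $f|_L\colon L\to\mathrm{link}_{\SS^n_{n+2}}([w_0w_1])$, and nothing guarantees that any edge of any positive simplex has link degree exactly $\pm1$: the link of $[v_0v_1]$ contains the nondegenerate $(n-2)$-simplex $[v_2\ldots v_n]$, but it may contain arbitrarily many other nondegenerately mapped simplices of either sign, so the total can be any integer (including $0$). The proposed escape hatch --- ``auxiliary local modifications'' --- is not free: any such modification costs vertices, and the whole point of the proposition is the sharp count of $3$. There is also a smaller technical wrinkle: $f|_{\sigma_1\star L}$ is not literally a join of maps in the sense of Proposition~\ref{pr:star-correct}, because $f$ may send vertices of $L$ to $w_0$ or $w_1$, so $f|_L$ need not land in the link of $[w_0w_1]$; this is repairable at the chain level (degenerate simplices contribute nothing), but it is one more thing to argue.

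The paper's proof avoids the link entirely and is much softer. It performs three \emph{central subdivisions} of $n$-simplices, each adding a single vertex: starring a positive nondegenerate simplex $\sigma$ and sending the new apex to the one vertex of $\SS^n_{n+2}$ not in $f(\sigma)$ replaces one positive preimage by $n+1$ \emph{negative} nondegenerate simplices, dropping the degree to $d-1$; the same move applied to a negative simplex raises the degree by $1$. One subdivision of a positive simplex followed by two subdivisions of the resulting negative simplices gives degree $d-1+1+1=d+1$ with exactly $3$ new vertices. The existence of a positive simplex (guaranteed by $d>0$) is the only input, whereas your route needs a hypothesis about local degrees of edges that simply may not hold. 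If you want to salvage your idea, you would need to prove that a vertex-minimal $K_d$ (or some $K_d$ with $\lambda(n,d)$ vertices) always contains an edge of link degree $\pm1$, which looks harder than the proposition itself.
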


\begin{proof}
Take a simplicial map $f:K\to\SS^n_{n+2}$ of degree $d$.
For a positive $n$-simplex $\sigma\subset K$
we make a {\it stellar subdivision}:
we add one more vertex to $V(K)$ and replace $\sigma$ by the cone over $\partial\sigma$,
which consists of $n$-simplices $\sigma_0,\ldots,\sigma_n$
(so $|K|$ does not change up to homeomorphism).
Note that $f$ can be extended to $\sigma_0,\ldots,\sigma_n$
so that they become non-degenerate negative simplices.
Therefore we obtain $\deg f=d-1$.

\begin{figure}[h]
\center{\includegraphics[width=130mm]{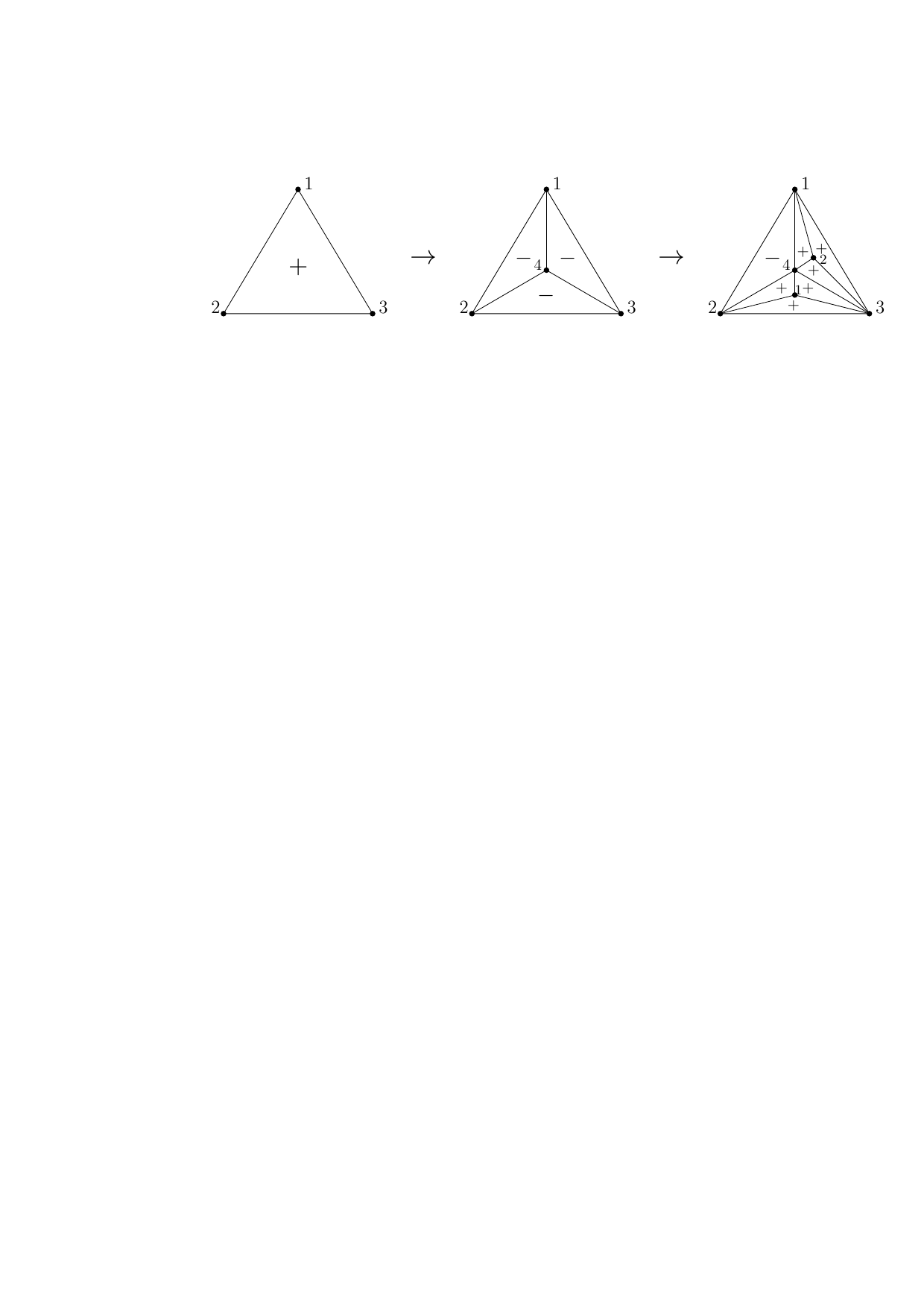}}
\caption{%
Labels on vertices indicate their images,
and labels on simplices indicate their signes}\label{fig:central-subdivision}
\end{figure}

Now make a stellar subdivision for $\sigma_0$ and $\sigma_1$ 
and extend the map $f$ to all the new simplices in a similar way
(Fig.~\ref{fig:central-subdivision} illustrates the case $n=2$).
Denote the obtained complex by $A(K)$.
We have $V(A(K))=V(K)+3$,
and the degree of the obtained map $A(K)\to\SS^n_{n+2}$ equals $d+1$.
\end{proof}

It should be noted that if we apply the operation from Proposition~\ref{pr:d+1} 
to a standard degree $k$ map $\SS^1_{3k}\to\SS^1_3$,
then $A(\SS^1_{3k})=\SS^1_{3k+3}$ 
and we obtain the standard degree $k+1$ map $\SS^1_{3k+3}\to\SS^1_3$.

\begin{theorem}\label{th:6hh-example}
For any $d>\big(\frac1{\sqrt[h]2-1}+1\big)^h$
there is a triangulated sphere $K$ with $|V(K)|^h\le d(6h)^h$ and a degree $d$ map $K\to\SS^n_{n+2}$.
\end{theorem}

\begin{proof}
Let $l:=\lfloor\sqrt[h]d\rfloor$.
In other words, $l^h\le d<(l+1)^h$.
The condition $d>\big(\frac1{\sqrt[h]2-1}+1\big)^h$
is needed to deduce that 
\begin{equation}\label{eq:2lh}
2l^h>(l+1)^h.
\end{equation}
Indeed, $(l+1)^h>d>\big(\frac1{\sqrt[h]2-1}+1\big)^h$,
therefore $l>\frac1{\sqrt[h]2-1}$,
so $\frac1l<\sqrt[h]2-1$,
then $\frac{l+1}l<\sqrt[h]2$,
and we obtain (\ref{eq:2lh}).

Write $d=:\overline{k_hk_{h-1}\ldots k_{1}1}$ in the base-$l$ number system
(from (\ref{eq:2lh}), the leading digit is $1$).
This precisely means that 
\begin{equation}\label{eq:skobki}
d=(\ldots((l+k_{1})l+k_{2})l+\ldots+k_{h-1})l+k_h,
\end{equation}
where $0\le k_0,\ldots,k_{h-1}<l$.

\medskip

Suppose $n$ is odd.
Next we inductively define a sequence of simplicial complexes $X_1$, $Y_1$, $X_2$, $Y_2$,\,\ldots, $X_h$, $Y_h$.
Let $X_1:=\SS_{3l}$,
then $Y_i:=A^{k_i}(X_i)$ for $i=1,\ldots,h$,
and $X_j:=Y_{j-1}\star\SS_{3l}$ for $j=2,\ldots,h$.
Note that $|X_i|\cong|Y_i|\cong\SS^{2i-1}$.

We have a degree $l$ map $X_1\to\SS^1_3$.
Then using Proposition~\ref{pr:d+1} we obtain a degree $l+k_1$ map $Y_1\to\SS^1_3$.
Further, by Propositions \ref{pr:star-correct} and \ref{pr:deg1},
we construct a degree $(l+k_1)l$ map $X_2\to\SS^3_5$.
Then again by Proposition~\ref{pr:d+1} we get a degree $(l+k_1)l+k_2$ map $Y_2\to\SS^2_5$.
Continuing in this way according to the sequence of summands and factors in expression (\ref{eq:skobki}), 
we will eventually obtain a degree $d$ map $Y_h\to\SS^n_{n+2}$.

To complete the proof in case odd $n$
it remains to note that $|V(Y_i)|=|V(X_i)|+3k_i$ and $|V(X_{i+1})|=|V(Y_i)|+3l$,
so $|V(Y_h)| = 3lh+3\sum k_i<6lh$.
Therefore $|V(Y_h)|^h<c_2d$ and we can let $K=Y_h$.

\medskip

To construct an example in case even $n$, we just take the suspension, $K=Y_h\star\SS^0_2$.
Note that $k_1<l$, so we have $3\sum k_i+2\le3lh$, therefore the estimation $|V(K)|\le6lh$ 
for the complex with two added vertices still holds.
\end{proof}

\begin{remark}\label{r:comb-nondegenerate}
Note that
this triangulation
is combinatorial
(the star of every vertex has simplicial embedding into $\R^n$, so the stars form a $PL$-atlas of~$\SS^n$),
since the join of combinatorial triangulations of spheres is a combinatorial triangulation of a sphere,
and the stellar subdivision of a combinatorial triangulation remains combinatorial.

Moreover, these triangulations are isomorphic to the boundary of a convex simplicial polytope in $\R^{n+1}$.
Indeed, suppose $P\subset\R^{k+1}$ and $Q\subset\R^{m+1}$ 
are convex polytopes containing the origins in their interiors.
Embed them into $\R^{k+m+2}$ 
as $P\times 0$ and $0\times Q$. 
Then the join $\partial P\star\partial Q$ 
is just the boundary of the convex hull of the union $P\cup Q$
(called the {\it free sum} of the polytopes).
And the stellar subdivision can be obtain by adding a new vertex outside the polytope near its face and taking a convex hull.
\end{remark}

\section{The lower bound}\label{s:cyclic-polytope}

To find $c_1$ in Theorem~\ref{th:lambda-in-h}, we need the following facts
(see e.\,g.\ \cite[\S5]{matousek-discr} or \cite{ziegler} for more details).

Let $\gamma:\R\to\R^{n+1}$ be the moment curve, $\gamma(t):=(t,t^2,\ldots,t^{n+1})$.
Take any $0<t_1<\ldots<t_k$ and denote the convex hull of $\gamma(t_1),\ldots,\gamma(t_k)\in\R^{n+1}$ by $P_k$.
Note that for $k\ge n+2$, $P_k$ is a simplicial $(n+1)$-polytope with $k$ vertices.
The set of its faces defines a triangulation of $\SS^{n}$, which does not depend on the choice of $t_1,\ldots,t_k$.

\begin{theorem}[Upper bound theorem, \cite{stanley}]\label{th:upper-bound}
If $|K|\cong\SS^n$ and $V(K)=k$, then $F(K)\le F(P_k)$.
\hfill$\square$
\end{theorem}

\begin{theorem}[Gale evenness condition, \cite{gale}]\label{th:evenness}
The vertices $\gamma(t_{i_1}),\ldots,\gamma(t_{i_{k+1}})\in P_k$ form a face if and only if
for every $j,j'\notin\{i_1,\ldots,i_{k+1}\}$ the number of indices $\{i_s\ |\ j<i_s<j'\}$ is even.
Namely,
$F(P_k)=2{k-h-1\choose h}$ for odd $n$ and
$F(P_k)={k-h\choose h}+{k-h-1\choose h-1}$ for even $n$.
\hfill$\square$
\end{theorem}

We use these results in the following estimate.

\begin{proposition}\label{pr:lowbound}
If $|K|\cong\SS^n$ and there is a degree $d$ simplicial map $f:K\to\SS^n_{n+2}$,
then $|V(K)|^h\ge d\cdot\frac{(n+2)h!}2$.
\end{proposition}

\begin{proof}
Suppose $K$ has $k$ vertices.
By Theorem~\ref{th:upper-bound} we have $F(K)\le F(P_k)$.
By Theorem~\ref{th:evenness} we have
$F(P_k)\le2{k-h\choose h}$ for all.
Therefore $F(K)\le\frac{2k^h}{h!}$.

On the other hand,
the preimage of every $n$-face of $\SS^n_{n+2}$ under $f$
consists of at least $d$ simplices.
So $F(K)\ge d(n+2)$, and the proposition follows.
\end{proof}

\begin{proof}[Proof of Theorem~\ref{th:lambda-in-h}]
Let $c_1=\frac{(n+2)h!}2$. Then Proposition~\ref{pr:lowbound} gives $\lambda(n,d)^h\ge c_1d$ for all $d$.
Further, let $c_2=(6h)^h$. Then Theorem~\ref{th:6hh-example} gives $\lambda(n,d)^h\le c_2d$ for all $d>\big(\frac1{\sqrt[h]2-1}+1\big)^h$.
\end{proof}

\section{Questions}

We have estimated the upper and lower linear bounds for $\lambda(n,d)^h$ as $d\to\infty$.
However, finding exact values of $\lambda(n,d)$ appears to be a difficult problem.
Our estimates 
are apparently far from the optimal ones.
It is also interesting to study the monotonicity of this function.

\begin{question}
Does there exist $\lim_{d\to\infty}\frac{\lambda(n,d)^h}d$?
\end{question}

Note that the examples of maps we constructed have a lot of degenerate simplices.
The problem of obtaining a similar estimation
for maps without degenerate simplices is more difficult.

\begin{question}
Will 
the statement of corollary~\ref{cor:limsup-lambda} 
be correct if we consider maps that do not degenerate $n$-simplices?
If we consider maps that have only positive simplices?
\end{question}

One can try to construct a simplicial map from the boundary of a cyclic polytope.
It would be interesting to study whether it is degree-extreme.

\begin{question}
Let $K=\partial P_k$ with vertices $v_1,\ldots,v_k$, 
and let $V(\SS^n_{n+2})=\{a_1,\ldots,a_{n+2}\}$.
Define $f:K\to\SS^n_{n+2}$ as $v_i\mapsto a_j$ for $j\equiv i\pmod{n+2}$.
Is the degree of $f$ maximal among all maps form simplicial spheres with $k$ vertices to $\SS^n_{n+2}$?
\end{question}

A list of other interesting problems is given in \cite{basak-gupta-triverdi}.

\subsection*{Acknowledgments}

I wish to thank Mikhail Bludov, Roman Karasev, and Fedor Vylegzhanin for consultations on literature,
as well as to O.\,Musin for his attention to the work.
Also I am very grateful to Mikhail Bludov, Roman Karasev and Anastasia Vakhrina for verifying the proof,
and to the anonymous referee for helpful advice. 
Finally, I am grateful to participants of the MIPT seminar ``Combinatorics and topology'' for the productive discussions.

The research is supported by the MSHE.


\end{document}